\newcommand{\bq}{\begin{equation}}
\newcommand{\eq}{\end{equation}}
\renewcommand{\ldots}{\dotsc}
\newcommand{\bv}{{\bf v}}
\newcommand{\bn}{{\bf n}}
\newcommand{\bt}{{\bf t}}
\newcommand{\vn}{{\bf n}}
\def\bc{{\bf c}}
\def\bn{{\bf n}}
\def\bq{{\bf q}}
\def\3bar{{|\hspace{-.02in}|\hspace{-.02in}|}}
\newcommand{\bbr}{\mathbb{R}}
\newcommand{\vbeta}{\boldsymbol{\beta}}
\newcommand{\vchi}{\boldsymbol{\chi}}
\renewcommand{\div}{\nabla\cdot}
\newcommand{\calF}{{\cal F}}
\newcommand{\calT}{{\cal T}}
\newcommand{\ve}{{\bf e}}
\newcommand{\vp}{{\bf p}}
\newcommand{\vq}{{\bf q}}
\newcommand{\calA}{{\cal A}}
\begin{document}

\title{A Computational Study of the Weak Galerkin Method for Second-Order Elliptic Equations}

\author{
Lin Mu
\thanks{Department of Applied Science,
University of Arkansas at Little Rock,
Little Rock, AR 72204, USA,
\texttt{lxmu@ualr.edu}}
\and
Junping Wang
\thanks{Division of Mathematical Sciences,
National Science Foundation, Arlington, VA 22230, USA,
\texttt{jwang@nsf.gov}. The research of Wang was supported by the
NSF IR/D program, while working at the Foundation. However, any
opinion, finding, and conclusions or recommendations expressed in
this material are those of the author and do not necessarily reflect
the views of the National Science Foundation.} \and Yanqiu Wang
\thanks{Department of Mathematics,
Oklahoma State University,
Stillwater, OK 74078, USA,
\texttt{yqwang@math.okstate.edu}}
\and
Xiu Ye
\thanks{Department of Mathematics,
University of Arkansas at Little Rock, Little Rock, AR 72204, USA,
\texttt{xxye@ualr.edu}. The research of Ye was supported in part by
the National Science Foundation under Grant No. DMS-1115097.}}

\maketitle

\begin{abstract}
The weak Galerkin finite element method is a novel numerical method
that was first proposed and analyzed by Wang and Ye in
\cite{WangYe_PrepSINUM_2011} for general second order elliptic
problems on triangular meshes. The goal of this paper is to conduct
a computational investigation for the weak Galerkin method for
various model problems with more general finite element partitions.
The numerical results confirm the theory established in
\cite{WangYe_PrepSINUM_2011}. The results also indicate that the
weak Galerkin method is efficient, robust, and reliable in
scientific computing.
\end{abstract}

\textbf{Keywords}: finite element methods, weak Galerkin method

\textbf{AMS 2000 Classification}:
65N30

%%%%%%%%%%%%%%%%%%%%%%%%%%%%%%%%%%%%%%%%%%%%%%%%%%%%%%%%%%%
\section{Introduction}
In this paper, we are concerned with computation and numerical
accuracy issues for the {\em weak Galerkin} method that was recently
introduced in \cite{WangYe_PrepSINUM_2011} for second order elliptic
equations. The weak Galerkin method is an extension of the standard
Galerkin finite element method where classical derivatives were
substituted by weakly defined derivatives on functions with
discontinuity. The weak Galerkin method is also related to the
standard mixed finite element method in that the two methods are
identical for simple model problems (such as the Poisson problem).
But they have fundamental differences for general second order
elliptic equations. The goal of this paper is to numerically
demonstrate the efficiency and accuracy of the weak Galerkin method
in scientific computing. In addition, we shall extend the weak
Galerkin method of \cite{WangYe_PrepSINUM_2011} from triangular and
tetrahedral elements to rectangular and cubic elements.

For simplicity, we take the linear second order elliptic equation as
our model problem. More precisely, let $\Omega$ be an open bounded
domain in $\bbr^d$, $d = 2,3$ with Lipschitz continuous boundary
$\partial\Omega$. The model problem seeks an unknown function
$u=u(x)$ satisfying
\begin{equation} \label{eq:ellipticproblem}
  \begin{array}{rll}
    -\nabla\cdot(\calA \nabla u) + \vbeta \cdot\nabla u + \gamma u &=& f\qquad \textrm{in }
    \Omega, \\
    u &=& g \qquad \textrm{on } \partial\Omega,
  \end{array}
\end{equation}
where $\calA\in [L^{\infty}(\Omega)]^{d\times d}$, $\vbeta\in
[L^{\infty}(\Omega)]^d$, and $\gamma\in L^{\infty}(\Omega)$ are
vector- and scalar-valued functions, as appropriate. Furthermore,
assume that $\calA$ is a symmetric and uniformly positive definite
matrix and the problem (\ref{eq:ellipticproblem}) has one and only
one weak solution in the usual Sobolev space $H^1(\Omega)$
consisting of square integrable derivatives up to order one. $f$ and
$g$ are given functions that ensure the desired solvability of
(\ref{eq:ellipticproblem}).

Throughout the paper, we use $\|\cdot\|$ to denote the standard
$L^2$ norm over the domain $\Omega$, and use bold face Latin
characters to denote vectors or vector-valued functions. The paper
is organized as follows. In Section 2, the weak Galerkin method is
introduced and an abstract theory is given. In particular, we prove
that certain rectangular elements satisfy the assumptions in the
abstract theory, and thus establish a well-posedness and error
estimate for the corresponding weak Galerkin method with rectangular
meshes. In Section 3, we present some implementation details for the
weak Galerkin elements. Finally in Section 4, we report some
numerical results for various test problems. The numerical
experiments not only confirm the theoretical predictions as given in
the original paper \cite{WangYe_PrepSINUM_2011}, but also reveal new
results that have not yet been theoretically proved.

%%%%%%%%%%%%%%%%%%%%%%%%%%%%%%%%%%%%%%%%%%%%%%%%%%%%%%%%%%%%%%%%%%%%%%%%%%%%%%%%%%%%
%%%%%%%%%%%%%%%%%%%%%%%%%%%%%%%%%%%%%%%%%%%%%%%%%%%%%%%%%%%%%%%%%%%%%%%%%%%%%%%%%%%%
%%%%%%%%%%%%%%%%%%%%%%%%%%%%%%%%%%%%%%%%%%%%%%%%%%%%%%%%%%%%%%%%%%%%%%%%%%%%%%%%%%%%
\section{The Weak Galerkin Method}

Let ${\cal T}_h$ be a shape-regular, quasi-uniform mesh of the
domain $\Omega$, with characteristic mesh size $h$. In
two-dimension, we consider triangular and rectangular meshes, and in
three-dimension, we mainly consider tetrahedral and hexahedral
meshes. For each element $K\in {\cal T}_h$, denote by $K_0$ and
$\partial K$ the interior and the boundary of $K$, respectively.
Here, $K$ can be a triangle, a rectangle, a tetrahedron or a
hexahedron. The boundary $\partial K$ consists of several ``sides'',
which are edges in two-dimension or faces(polygons) in
three-dimension. Denote by $\calF_h$ the collection of all
edges/faces in ${\cal T}_h$.

On each $K\in {\cal T}_h$, let $P_j(K_0)$ be the set of polynomials
on $K_0$ with degree less than or equal to $j$, and $Q_j(K)$ be the
set of polynomials on $K_0$ with degree of each variable less than
or equal to $j$. Likewise, on each $F\in \calF_h$, $P_l(F)$ and
$Q_l(F)$ are defined analogously. Now, define a weak discrete space
on mesh $\calT_h$ by
$$
\begin{aligned}
S_{h} = & \{v:\:  v|_{K_0}\in P_j(K_0) \textrm{ or }Q_j(K_0) \textrm{ for all } K\in \calT_h,\\
 &\qquad  v|_F\in P_l(F) \textrm{ or }Q_l(F)  \textrm{ for all } F\in \calF_h \}.
\end{aligned}
$$
Observe that the definition of $S_h$ does not require any form of
continuity across element or edge/face interfaces. A function in
$S_h$ is characterized by its value on the interior of each element
plus its value on the edges/faces. Therefore, it is convenient to
represent functions in $S_h$ with two components, $v=\{v_0, v_b\}$,
where $v_0$ denotes the value of $v$ on all $K_0$s and $v_b$ denotes
the value of $v$ on $\calF_h$.

We further define an $L^2$ projection from $H^1(\Omega)$ onto $S_h$
by setting $Q_h v \equiv \{Q_0 v,\, Q_b v\}$, where $Q_0 v|_K$ is
the local $L^2$ projection of $v$ in $P_j(K_0)$, for $K\in\calT_h$,
and $Q_b v|_F$ is the local $L^2$ projection in $P_l(F)$, for $F\in
\calF_h$.

The idea of the weak Galerkin method is to seek an approximate
solution to Equation (\ref{eq:ellipticproblem}) in the weak discrete
space $S_h$. To this end, we need to introduce a discrete gradient
operator on $S_h$. Indeed, this will be done locally on each element
$K$. Let $V_r(K)$ be a space of polynomials on $K$ such that
$[P_{r}(K)]^d\subset V_r(K)$; details of $V_r(K)$ will be given
later. Let
$$
\Sigma_h = \{\vq\in [L^2(\Omega)]^d:\: \vq|_K \in V_r(K)\textrm{ for
all }K\in \calT_h\}.
$$
A discrete gradient of $v_h=\{v_0,v_b\}\in S_h$ is defined to be a
function $\nabla_d v_h \in \Sigma_h$ such that on each
$K\in\calT_h$,
\begin{equation}\label{discrete-weak-gradient-new}
\int_K \nabla_{d} v_h\cdot \vq\, dx = -\int_K v_0 \div \vq\, dx + \int_{\partial K} v_b \vq\cdot\bn\, ds,
\quad \textrm{for all } \vq\in V_r(K),
\end{equation}
where $\bn$ is the unit outward normal on $\partial K$.
Clearly, such a discrete gradient is always well-defined.

Denote by $(\cdot,\cdot)$ the standard $L^2$-inner product on
$\Omega$. Let $S_h^0$ be a subset of $S_h$ consisting of functions
with vanishing boundary values. Now we can write the weak Galerkin
formulation for Equation (\ref{eq:ellipticproblem}) as follows: find
$u_h = \{u_0, u_b\} \in S_h$ such that $u_b= Q_b g$ on each
edge/face $F\subset \partial\Omega$ and
\begin{equation} \label{eq:weakformulation}
(\calA\nabla_d u_h, \nabla_d v_h)  + (\vbeta \cdot \nabla_d u_h,
v_{0}) + (\gamma u_{0},\,v_{0}) = (f, v_{0})
\end{equation}
for all $v_h = \{v_0, v_b\} \in S_h^0$. For simplicity of notation,
we introduce the following bilinear form
\begin{equation}\label{bilinearform}
a(u_h, v_h)\triangleq (\calA\nabla_d u_h, \nabla_d v_h)  + (\vbeta
\cdot \nabla_d u_h, v_{0}) + (\gamma u_{0},\,v_{0}).
\end{equation}

The spaces $S_h$ and $\Sigma_h$ can not be chosen arbitrarily. There
are certain criteria they need to follow, in order to guarantee that
Equation (\ref{eq:weakformulation}) provides a good approximation to
the solution of Equation (\ref{eq:ellipticproblem}). For example,
$\Sigma_h$ has to be rich enough to prevent from the loss of
information in the process of taking discrete gradients, while it
should remain to be sufficiently small for its computational cost.
Hence, we would like to impose the following conditions upon $S_h$
and $\Sigma_h$: \vskip 0.2cm

\begin{description}
  \item{{\bf (P1) }} For any $v_h\in S_h$ and $K\in\calT_h$, $\nabla_d v_h|_K = 0$ if and only if $v_0=v_b = constant$ on $K$.
  \item{{\bf (P2) }} For any $w\in H^{m+1}(\Omega)$, where $0\le m\le j+1$, we have
    $$\|\nabla_d (Q_h w)-\nabla w\|\le Ch^{m}\|w\|_{m+1},$$
   where and in what follows of this paper, $C$ denotes a generic constant independent of
   the mesh size $h$.
\end{description}
\medskip

Under the above two assumptions, it has been proved in
\cite{WangYe_PrepSINUM_2011} that Equation
(\ref{eq:weakformulation}) has a unique solution as long as the mesh
size $h$ is moderately small and the dual of
(\ref{eq:ellipticproblem}) has an $H^{1+s}$-regularity with some
$s>0$. Furthermore, one has the following error estimate:
\begin{equation} \label{eq:errorestimation}
\begin{aligned}
\|\nabla_d (u_h-Q_h u)\| &\le C\left(h^{1+s} \|f-Q_0 f\| + h^{m}\|u\|_{m+1}\right), \\
\|u_0-Q_0 u\| &\le C\left(h^{1+s} \|f-Q_0 f\| + h^{m+s}\|u\|_{m+1}\right),
\end{aligned}
\end{equation}
for any $0\le m\le j+1$, and $s>0$ is the largest number such that
the dual of Equation (\ref{eq:ellipticproblem}) has an
$H^{1+s}$-regularity.

\medskip

There are several possible combinations of $S_h$ and $\Sigma_h$ that
satisfy Assumptions {\bf (P1)} and {\bf (P2)}. Two examples of
triangular elements have been given in \cite{WangYe_PrepSINUM_2011},
which are
\begin{enumerate}
\item Triangular element $(P_j(K_0),\,P_j(F),\, RT_j(K))$ for $j\ge 0$.
That is, in the definition of $S_h$, we set $l=j$. And in the definition
of $\Sigma_h$, we set $r=j$ and $V_r(K)$ to be the $j$th order Raviart-Thomas element $RT_j(K)$ \cite{rt}.
\item Triangular element $(P_j(K_0),\,P_{j+1}(F),\, (P_{j+1}(K))^d)$ for $j\ge 0$.
That is, in the definition of $S_h$, we set $l=j+1$.
And in the definition of  $\Sigma_h$, we set $r=j+1$ and $V_r(K)=(P_{j+1}(K))^d$, or in other words, the $(j+1)$st order Brezzi-Douglas-Marini element \cite{bdm}.
\end{enumerate}

\medskip
The rest of this section shall extend this result to rectangular
elements. An extension to three-dimensional tetrahedral and
hexahedral elements is straightforward.

%%%%%%%%%%%%%%%%%%%%%%%%%%%%%%%%%%%%%%%%%%%%%%%%%%%%%%%%%%%%%%%%%%%%%%%
\subsection{Weak Galerkin on Rectangular Meshes}
Consider the following two type of rectangular elements:
\begin{enumerate}
\item Rectangular element $(Q_j(K_0),\,Q_j(F),\, RT_j(K))$ for $j\ge 0$.
That is, in the definition of $S_h$, we set $l=j$. And in the definition
of $\Sigma_h$, we set $r=j$ and $V_r(K)$ to be the $j$th order Raviart-Thomas element $RT_j(K)$ on rectangle $K$.
\item Rectangular element $(P_j(K_0),\,P_{j+1}(F),\, BDM_{j+1}(K))$ for $j\ge 0$.
That is, in the definition of $S_h$, we set $l=j+1$.
And in the definition of  $\Sigma_h$, we set $r=j+1$ and $V_r(K)$ to be the $(j+1)$st order Brezzi-Douglas-Marini element $BDM_{j+1}(K)$
on rectangle $K$.
\end{enumerate}\medskip
Denote by $Q_{i,j}(K)$ the space of polynomials with degree in $x$
and $y$ less than or equal to $i$ and $j$, respectively, and
${\bf curl} = \begin{bmatrix}-\partial/\partial y\\
\partial/\partial x\end{bmatrix}$. It is known that
$$
\begin{aligned}
  RT_j(K) &= \begin{bmatrix}Q_{j+1,j}(K) \\ Q_{j,j+1}(K) \end{bmatrix}, \\[2mm]
  BDM_{j+1}(K) &= \begin{bmatrix}P_{j+1}(K)\\ P_{j+1}(K)  \end{bmatrix}
           + span\left\{{\bf curl}\, (x^{j+2}y), \; {\bf curl}\, (xy^{j+2}) \right\},
\end{aligned}
$$
and $\dim(RT_j(K)) = 2(j+1)(j+2)$, $\dim(BDM_{j+1}(K)) = (j+2)(j+3)+2$.
The degrees of freedom for $RT_j(K)$ are:
$$
\begin{aligned}
&\int_F (\vq\cdot\vn) w\, ds ,\qquad && \textrm{for all }w\in Q_j(F),\;F\in\calF\cap\partial K,\\[2mm]
&\int_K \vq\cdot\vp\, dx,\qquad && \textrm{for all }\vp\in Q_{j-1,j}(K)\times Q_{j,j-1}(K).
\end{aligned}
$$
The degrees of freedom for $BDM_{j+1}(K)$ are
$$
\begin{aligned}
&\int_F (\vq\cdot\vn) w\, ds ,\qquad && \textrm{for all }w\in P_{j+1}(F),\;F\in\calF\cap\partial K,\\[2mm]
&\int_K \vq\cdot\vp\, dx,\qquad && \textrm{for all }\vp\in
[P_{j-1}(K)]^2.
\end{aligned}
$$
It is also well-known that on each rectangle $K\in \calT_h$ and each edge $F\in\calF_h\cap\partial K$,
\begin{equation} \label{eq:rec2}
\begin{aligned}
\nabla\cdot RT_j(K) &= Q_j(K_0),\qquad & RT_j(K)\cdot\vn|_F &= Q_j(F),\\
\nabla\cdot BDM_{j+1}(K) &= P_j(K_0),\qquad & BDM_{j+1}(K)\cdot\vn|_F &= P_{j+1}(F).
\end{aligned}
\end{equation}

Next, we show that the two set of elements defined as above satisfy
Assumptions {\bf (P1)} and {\bf (P2)}.

\medskip
\begin{lemma}\label{lemma2.1}
  For the two type of rectangular elements given in this subsection, the Assumption
  {\bf P1} holds true.
\end{lemma}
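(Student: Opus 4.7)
The plan is to treat the two implications separately. The easy direction (if $v_0 = v_b = c$ is constant, then $\nabla_d v_h|_K = 0$) is an immediate consequence of the divergence theorem: substituting the constant $c$ for both $v_0$ and $v_b$ in (\ref{discrete-weak-gradient-new}) gives $-c\int_K\nabla\cdot\vq\,dx + c\int_{\partial K}\vq\cdot\vn\,ds = 0$ for every $\vq\in V_r(K)$, so $\nabla_d v_h|_K = 0$.

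For the converse, assume $\nabla_d v_h|_K = 0$, i.e.,
$$-\int_K v_0\,\nabla\cdot\vq\,dx + \int_{\partial K} v_b\,\vq\cdot\vn\,ds = 0\qquad\text{for every }\vq\in V_r(K).$$
I would proceed in two steps. First, restrict the test to the subspace $V_r^0(K) := \{\vq \in V_r(K) : \vq\cdot\vn = 0 \text{ on } \partial K\}$, which kills the boundary integral and yields $\int_K v_0\,\nabla\cdot\vq\,dx = 0$ for all $\vq \in V_r^0(K)$. By (\ref{eq:rec2}), $\nabla\cdot V_r(K)$ coincides with the polynomial space in which $v_0$ lives ($Q_j(K_0)$ for the $RT_j$ element, $P_j(K_0)$ for the $BDM_{j+1}$ element). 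Invoking the standard surjectivity of $\nabla\cdot$ from $V_r^0(K)$ onto the mean-zero subspace of $\nabla\cdot V_r(K)$ for the RT and BDM families, one concludes that $v_0$ is $L^2$-orthogonal to every zero-mean polynomial in its own space, and hence must be a constant $c$ on $K_0$. Substituting $v_0 = c$ back into the identity and applying the divergence theorem to the interior term, the identity collapses to $\int_{\partial K}(v_b-c)\vq\cdot\vn\,ds = 0$ for every $\vq \in V_r(K)$. Since the normal traces $\vq\cdot\vn|_F$ sweep out the entire trace space $Q_j(F)$ (resp.\ $P_{j+1}(F)$) as $\vq$ varies, once again by (\ref{eq:rec2}), and $v_b|_F$ lies in this same polynomial space, it follows that $v_b = c$ on every face of $K$.

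The main obstacle is the surjectivity of $\nabla\cdot$ on the zero-normal-trace subspace of $V_r(K)$. This is classical for $RT_j$ and $BDM_{j+1}$ on a rectangle and can be verified either by a dimension count based on the degree-of-freedom formulas recalled above, or from the exact de Rham sequence for these mixed spaces; once this fact is accepted, the rest of the argument is routine unpacking of (\ref{discrete-weak-gradient-new}) and (\ref{eq:rec2}).
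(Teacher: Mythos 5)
Your proof is correct, but it follows a different route from the paper's. The paper first integrates by parts in (\ref{discrete-weak-gradient-new}) to rewrite the vanishing of $\nabla_d v_h|_K$ as $\int_{\partial K}(v_b-v_0)\vq\cdot\vn\,ds+\int_K(\nabla v_0)\cdot\vq\,dx=0$; it then tests with $\vq$ having zero boundary degrees of freedom and observes that $\nabla v_0$ lies exactly in the interior moment space of $RT_j(K)$ (resp.\ $BDM_{j+1}(K)$), so unisolvence of the element's degrees of freedom forces $\nabla v_0=\mathbf{0}$ directly, after which the boundary degrees of freedom give $v_b=v_0$. You instead keep the derivative on $\vq$, restrict to the zero-normal-trace subspace $V_r^0(K)$, and invoke surjectivity of $\nabla\cdot$ from $V_r^0(K)$ onto the mean-zero part of $\nabla\cdot V_r(K)$ to conclude that $v_0$ is constant; your treatment of $v_b$ then coincides with the paper's. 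Both arguments lean on classical structural facts about the rectangular $RT$/$BDM$ elements, but on slightly different ones: the paper needs only that $\nabla v_0$ sits in the interior moment space together with unisolvence (and must single out $j=0$ as a trivial case), while you need the local exactness/surjectivity property, which is a somewhat heavier lemma but handles all $j\ge 0$ uniformly and avoids the integration by parts entirely. Two small points to tighten: you should justify the surjectivity claim concretely (the dimension count you allude to does work, e.g.\ $\dim V_r^0(K)$ minus the dimension of the divergence-free, zero-normal-trace fields equals $\dim(\nabla\cdot V_r(K))-1$), and in the final step you should note explicitly that the normal trace can be prescribed on each face \emph{independently} --- i.e.\ one may choose $\vq$ with $\vq\cdot\vn|_F=(v_b-c)|_F$ on a given face and zero normal trace on the others --- which is what lets you localize $\int_{\partial K}(v_b-c)\vq\cdot\vn\,ds=0$ to a single face.
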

\begin{proof}
If $v_0=v_b = constant$ on $K$, then clearly $\nabla_d v_h|_K$
vanishes since the right-hand side of
(\ref{discrete-weak-gradient-new}) is zero from the divergence
theorem. Now let us assume that $\nabla_d v_h|_K = 0$. By
(\ref{discrete-weak-gradient-new}) and using integration by parts,
we have for all $\vq\in  RT_j(K)$ or $BDM_{j+1}(K)$,
\begin{equation} \label{eq:rec1}
\begin{aligned}
0 &= -\int_K v_0 \div \vq\, dx + \int_{\partial K} v_b \vq\cdot\bn\, ds\\
&= \int_{\partial K} (v_b-v_0)\vq\cdot\bn\, ds + \int_K (\nabla v_0)\cdot \vq\, dx.
\end{aligned}
\end{equation}

We first consider the element $(Q_j(K_0),\,Q_j(F),\, RT_j(K))$. If
$j=0$, then $v_0$ is a constant on $K_0$ and clearly $\nabla
v_0={\bf 0}$. If $j>0$, take $\vq$ such that $\int_F (\vq\cdot\vn)
w\, ds=0$ for all $w\in Q_j(F)$ and let it traverse through all
degrees of freedom defined by $\int_K \vq\cdot\vp\, dx$, for $\vp\in
Q_{j-1,j}(K)\times Q_{j,j-1}(K)$. Since $(v_b-v_0)|_F \in Q_j(F)$
and $\nabla v_0\in Q_{j-1,j}(K)\times Q_{j,j-1}(K)$, Equation
(\ref{eq:rec1}) gives $\nabla v_0 = {\bf 0}$, which implies that
$v_0$ is a constant on $K_0$. Now Equation (\ref{eq:rec1}) reduces
into
$$
\int_{\partial K} (v_b-v_0)\vq\cdot\bn\, ds = 0,\qquad \textrm{for all }\vq\in  RT_j(K).
$$
Next, since $(v_b-v_0)|_F \in Q_j(F) = RT_j(K)\cdot\vn|_F$ for all $F\in \calF_h\cap \partial K$,
by letting $\vq$ traverse through all degrees of freedom on $\partial K$,
we have $v_b-v_0=0$ on all $F$.
This implies $v_b=v_0=constant$ in $K$.

For the $(P_j(K_0),\,P_{j+1}(F),\, BDM_{j+1}(K))$ element, using the same argument as in the previous case,
and noticing that $\nabla v_0\in (P_{j-1}(K))^2$, $(v_b-v_0)|_F \in P_{j+1}(F) = BDM_{j+1}(K)\cdot\vn|_F$ for all $F\in \calF_h\cap \partial K$,
we can similarly prove that $v_b=v_0=constant$ in $K$.
\end{proof}

\medskip
\begin{lemma}\label{lemma2.2}
  For the two type of rectangular elements given in this subsection, the Assumption {\bf (P2)}
  holds true.
\end{lemma}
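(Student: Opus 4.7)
The plan is to identify $\nabla_d(Q_h w)|_K$ as the $L^2$-projection of $\nabla w$ onto $V_r(K)$, and then invoke standard polynomial approximation theory on shape-regular rectangular elements.

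First I would unpack the definition (\ref{discrete-weak-gradient-new}) applied to $Q_h w = \{Q_0 w, Q_b w\}$. For any $\vq \in V_r(K)$,
$$
\int_K \nabla_d(Q_h w)\cdot \vq\, dx = -\int_K Q_0 w\, \div\vq\, dx + \int_{\partial K} Q_b w\, \vq\cdot\bn\, ds.
$$
The two identities in (\ref{eq:rec2}) are exactly what is needed to remove the projections: for the first element, $\div\vq \in Q_j(K_0)$ and $\vq\cdot\bn|_F \in Q_j(F)$, which are precisely the ranges of $Q_0$ and $Q_b$; for the second element, the corresponding ranges are $P_j(K_0)$ and $P_{j+1}(F)$. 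So the $L^2$ projections can be stripped, and integration by parts yields
$$
\int_K \nabla_d(Q_h w)\cdot\vq\, dx = -\int_K w\,\div\vq\, dx + \int_{\partial K} w\,\vq\cdot\bn\, ds = \int_K \nabla w\cdot\vq\, dx
$$
for every $\vq \in V_r(K)$. Thus $\nabla_d(Q_h w)|_K$ equals the $L^2$ projection of $\nabla w$ onto $V_r(K)$, call it $P_{V_r}(\nabla w)$.

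The next step is a standard approximation estimate. For the first element $[P_j(K)]^d \subset RT_j(K)$ and for the second $[P_{j+1}(K)]^d \subset BDM_{j+1}(K)$, so in both cases $V_r(K)$ contains $[P_j(K)]^d$. By the Bramble--Hilbert lemma and a scaling argument on the shape-regular, quasi-uniform mesh, the $L^2$ projection onto $V_r(K)$ satisfies
$$
\|\nabla w - P_{V_r}(\nabla w)\|_{0,K} \le C h^{m}\, \|\nabla w\|_{m,K} \le C h^{m}\, \|w\|_{m+1,K}
$$
for $0 \le m \le j+1$. Summing over $K \in \calT_h$ yields the bound claimed in \textbf{(P2)}.

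I do not anticipate a real obstacle here: the crucial observation is the matching between the ranges of $Q_0, Q_b$ and the spaces $\div V_r(K)$, $V_r(K)\cdot\bn|_F$ recorded in (\ref{eq:rec2}), which is precisely the classical mixed-element compatibility that makes the commuting-diagram argument go through. The only mild care needed is to write the proof uniformly for the two element choices, which can be done by treating $j$ (for $RT_j$) and $j{+}1$ (for $BDM_{j+1}$) in parallel, and to invoke the standard $L^2$-projection approximation estimate on rectangles rather than the canonical RT/BDM interpolation (whose commuting diagram would give the same identity but requires more regularity of $\nabla w$ to be well defined).
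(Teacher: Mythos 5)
Your proposal is correct and follows essentially the same route as the paper: strip the $L^2$ projections $Q_0$, $Q_b$ using the compatibility relations in (\ref{eq:rec2}), integrate by parts to identify $\nabla_d(Q_h w)|_K$ as the local $L^2$ projection of $\nabla w$ onto $V_r(K)$, and conclude via standard approximation estimates using $[P_j(K)]^d\subset V_r(K)$. The only difference is that you spell out the Bramble--Hilbert/scaling step and the remark about avoiding the canonical RT/BDM interpolant, which the paper leaves implicit.
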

\begin{proof}
  Let $w\in H^m(\Omega)$, $0\le m\le j+1$. For any $\vq\in \Sigma_h$ and $K\in\calT_h$,
by (\ref{eq:rec2}) and the definition of $L^2$ projections, we have
$$
  \begin{aligned}
    \int_K (\nabla_d Q_h w)\cdot\vq\, dx &= -\int_K(Q_0 w)(\nabla\cdot \vq)\, dx +
    \int_{\partial K} Q_bw(\vq\cdot\vn)\, ds \\
    &= -\int_K  w (\nabla\cdot \vq)\, dx + \int_{\partial K} w(\vq\cdot\vn)\, ds \\
    &= \int_K (\nabla w)\cdot \vq\,dx.
  \end{aligned}
$$
In other words, on each $K\in\calT_h$, $\nabla_d Q_hw$ is the $L^2$
projection of $\nabla w$ onto $RT_j(K)$ or $BDM_{j+1}(K)$. Thus, the
Assumption {\bf (P2)} follows immediately from the approximation
properties of the $L^2$ projection, and the fact that both $RT_j(K)$
and $BDM_{j+1}(K)$ contains the entire polynomial space
$[P_j(K)]^2$.
\end{proof}

\medskip
Using Lemma \ref{lemma2.1} and Lemma \ref{lemma2.2}, one can derive
the error estimate (\ref{eq:errorestimation}) for the rectangular
elements by following the argument presented in
\cite{WangYe_PrepSINUM_2011}. Details are left to interested readers
as an exercise.

%%%%%%%%%%%%%%%%%%%%%%%%%%%%%%%%%%%%%%%%%%%%%%%%%%%%%%%%%%%%%%%%%%%%%%%
%%%%%%%%%%%%%%%%%%%%%%%%%%%%%%%%%%%%%%%%%%%%%%%%%%%%%%%%%%%%%%%%%%%%%%%
%%%%%%%%%%%%%%%%%%%%%%%%%%%%%%%%%%%%%%%%%%%%%%%%%%%%%%%%%%%%%%%%%%%%%%%
\section{Computation of Local Stiffness Matrices}
Similar to the standard Galerkin finite element method, the weak
Galerkin method (\ref{eq:weakformulation}) can be implemented as a
matrix problem where the matrix is given as the sum of local
stiffness matrices on each element $K\in\calT_h$. Thus, a key step
in the computer implementation of the weak Galerkin is to compute
element stiffness matrices. The goal of this section is to
demonstrate ways of computing element stiffness matrices for various
elements introduced in the previous sections.

For a given element $K\in\calT_h$, let $\phi_{0,i}$,
$i=1,\ldots,N_0$, be a set of basis functions for $P_j(K_0)$ or
$Q_j(K_0)$, and $\phi_{b,i}$, $i=1,\ldots,N_b$, be a set of basis
functions for $\sum_{F\in
\partial K\cap \calF_h} P_l(F)$ or $\sum_{F\in \partial K\cap
\calF_h} Q_l(F)$ . Note that $\{\phi_{b,i}\}$ is the union of basis
functions from all edges/faces of element $K$. Then every
$v_h=\{v_0,v_b\}\in S_h$ has the following representation in $K$:
$$
%v_h|_K = \sum_{i=1}^{N_0} v^K_{0,i} \phi^K_{0,i} + \sum_{i=1}^{N_b} v^K_{b,i} \phi^K_{b,i}.
v_h|_K = \left\{\sum_{i=1}^{N_0} v_{0,i} \phi_{0,i},
\sum_{i=1}^{N_b} v_{b,i} \phi_{b,i}\right\}.
$$
On each $K$, the local stiffness matrix $M_K$ for Equation (\ref{eq:weakformulation})
can thus be written as a block matrix
\begin{equation}\label{eq:localstiffness}
M_K = \begin{bmatrix} M_{0,0} & M_{0,b} \\ M_{b,0} & M_{b,b}\end{bmatrix}
\end{equation}
where $M_{0,0}$ is an $N_0\times N_0$ matrix, $M_{0,b}$ is an $N_0\times N_b$ matrix,
$M_{b,0}$ is an $N_b\times N_0$ matrix,
and $M_{b,b}$ is an $N_b\times N_b$ matrix. These matrices are defined, respectively, by
$$
\begin{aligned}
M_{0,0} &= \begin{bmatrix}a(\phi_{0,j}, \phi_{0,i})_K\end{bmatrix}_{i,j}, \qquad &
M_{0,b} &= \begin{bmatrix}a(\phi_{b,j}, \phi_{0,i})_K\end{bmatrix}_{i,j}, \\
M_{b,0} &= \begin{bmatrix}a(\phi_{0,j}, \phi_{b,i})_K\end{bmatrix}_{i,j}, \qquad &
M_{b,b} &= \begin{bmatrix}a(\phi_{b,j}, \phi_{b,i})_K\end{bmatrix}_{i,j},
\end{aligned}
$$
where the bilinear form $a(\cdot,\cdot)$ is defined as in
(\ref{bilinearform}), and $i$, $j$ are the row and column indices,
respectively.

To compute each block of $M_K$, we first need to calculate the
discrete gradient operator $\nabla_d$. For convenience, denote the
local vector representation of $v_h|_K$ by
$$
\underline{\bv}_0 = \begin{bmatrix}v_{0,1} \\[1mm] v_{0,2} \\ \vdots\\[1mm] v_{0,N_0}\end{bmatrix},\qquad
\underline{\bv}_b = \begin{bmatrix} v_{b,1} \\[1mm] v_{b,2} \\ \vdots \\[1mm] v_{b,N_b} \end{bmatrix}.
$$
Let $\vchi_i$, $i=1,\ldots,N_V$, be a set of basis functions for
$V_r(K)$. Then, for every $\vq_h\in \Sigma_h$, its value on $K$ can
be expressed as
$$
\vq_h|_K = \sum_{i=1}^{N_V} q_i \vchi_i.
$$
Similarly, we denote the local vector representation of $\vq_h|_K$
by
$$
\underline{\bq} = \begin{bmatrix}q_1\\[1mm] q_2\\ \vdots\\[1mm] q_{N_V} \end{bmatrix},\qquad
$$
Then, by the definition of the discrete gradient (\ref{discrete-weak-gradient-new}),
given $v_h|_K$, we can compute the vector form of $\nabla_d v_h$ on $K$ by
\begin{equation} \label{eq:localwg}
D_K\underline{(\nabla_d v_h)} =  -Z_K \underline{\bv}_0 + T_K \underline{\bv}_b,
\end{equation}
where the $N_V\times N_V$ matrix $D_K$, the $N_V\times N_0$ matrix $Z_K$, and the $N_V\times N_b$ matrix $T_K$
are defined, respectively, by
\begin{equation} \label{eq:DK}
D_K = \begin{bmatrix} \int_K\vchi_1\cdot\vchi_1\, dx & \cdots & \int_K\vchi_1\cdot\vchi_{N_V}\, dx \\
          \cdots & \cdots & \cdots \\
          \int_K\vchi_{N_V}\cdot\vchi_1\, dx & \cdots & \int_K\vchi_{N_V}\cdot\vchi_{N_V}\, dx\end{bmatrix},
\end{equation}
$$
Z_K = \begin{bmatrix}\int_K(\div\vchi_1)\phi_{0,1}\, dx & \cdots & \int_K(\div\vchi_1)\phi_{0,N_0}\, dx \\
          \cdots & \cdots & \cdots \\
          \int_K(\div\vchi_{N_V})\phi_{0,1}\, dx & \cdots & \int_K(\div\vchi_{N_V})\phi_{0,N_0}\, dx\end{bmatrix},
$$
and
$$
T_K = \begin{bmatrix}\int_{\partial K}(\vchi_1\cdot\bn)\phi_{b,1}\, ds & \cdots
                     & \int_{\partial K}(\vchi_1\cdot\bn)\phi_{b,N_b}\, ds \\
          \cdots & \cdots & \cdots \\
          \int_{\partial K}(\vchi_{N_V}\cdot\bn)\phi_{b,1}\, ds & \cdots
                     & \int_{\partial K}(\vchi_{N_V}\cdot\bn)\phi_{b,N_b}\, ds\end{bmatrix}.
$$
Notice that $D_K$ is a symmetric matrix.

Once the matrices $D_K$, $Z_K$ and $T_K$ are computed, we can use
(\ref{eq:localwg}) to calculate the weak gradient of basis functions
$\phi_{0,i}$ and $\phi_{b,i}$ on $K$. It is not hard to see that
\begin{equation} \label{eq:wgvectorform}
\underline{(\nabla_d \phi_{0,i})} = -D_K^{-1}Z_K\underline{\ve}_{i}^{N_0},\qquad
\underline{(\nabla_d \phi_{b,i})} = D_K^{-1}T_K \underline{\ve}_{i}^{N_b},
\end{equation}
where $\underline{\ve}_{i}^{N_0}$ and $\underline{\ve}_{i}^{N_b}$
are the standard basis for the Euclidean spaces $\bbr^{N_0}$ and $\bbr^{N_b}$, respectively,
such that its $i$-th entry is $1$ and all other entries are $0$.

Define matrices
\begin{equation} \label{eq:ABCK}
\begin{aligned}
  A_K &= \begin{bmatrix}(\calA \vchi_j,\vchi_i)_K\end{bmatrix}_{i,j},\\
  B_K &= \begin{bmatrix}(\vbeta\cdot\vchi_j,\phi_{0,i})_K\end{bmatrix}_{i,j},\\
  C_K &= \begin{bmatrix}(\gamma\phi_{0,j},\phi_{0,i})_K\end{bmatrix}_{i,j},
\end{aligned}
\end{equation}
where $(\cdot,\cdot)_K$ denote the standard inner-product on
$L^2(K)$ or $[L^2(K)]^d$, as appropriate. Clearly, $A_K$ is an
$N_V\times N_V$ matrix, $B_K$ is an $N_0\times N_V$ matrix, and
$C_K$ is an $N_0\times N_0$ matrix. Then, an elementary matrix
calculation shows that the local stiffness matrix $M_K$ for Equation
(\ref{eq:weakformulation}) can be expressed in a way as specified in
the following lemma. \smallskip
\begin{lemma} \label{lem:localstiffness}
The local stiffness matrix $M_K$ defined in
(\ref{eq:localstiffness}) can be computed by using the following
formula
\begin{equation}\label{localstiffnessmatrixMk}
 \begin{aligned}
   M_{0,0} &= Z_K^t D_K^{-t} A_K D_K^{-1} Z_K - B_K D_K^{-1} Z_K +  C_K,\\
   M_{0,b} &= - Z_K^t D_K^{-t} A_K D_K^{-1} T_K + B_K D_K^{-1} T_K, \\
   M_{b,0} &= - T_K^t D_K^{-t} A_K D_K^{-1} Z_K + T_K^t D_K^{-t} B_K^t,\\
   M_{b,b} &= T_K^t D_K^{-t} A_K D_K^{-1} T_K,\\
 \end{aligned}
\end{equation}
where the superscript $t$ stands for the standard matrix transpose.
\end{lemma}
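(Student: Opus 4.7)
The plan is to verify \eqref{localstiffnessmatrixMk} by a direct algebraic computation: once the coefficient vectors of the weak gradients of the basis functions are in hand from \eqref{eq:wgvectorform}, the lemma reduces to substitution and pattern-matching in $N_V$-dimensional linear algebra.

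I would begin by noting that by linearity of $\nabla_d$ and \eqref{eq:wgvectorform}, any $v_h=\{v_0,v_b\}\in S_h$ with local representation $(\underline{\bv}_0,\underline{\bv}_b)$ has the coefficient vector
$$
\underline{(\nabla_d v_h)}\;=\;-\,D_K^{-1}Z_K\,\underline{\bv}_0\;+\;D_K^{-1}T_K\,\underline{\bv}_b
$$
in the $\vchi_i$-basis of $V_r(K)$, and analogously for $u_h$. Then, using the definitions in \eqref{eq:ABCK}, each piece of $a(u_h,v_h)|_K$ becomes a matrix expression: diffusion equals $\underline{(\nabla_d v_h)}^{\,t} A_K\,\underline{(\nabla_d u_h)}$, convection equals $\underline{\bv}_0^{\,t} B_K\,\underline{(\nabla_d u_h)}$, and reaction equals $\underline{\bv}_0^{\,t} C_K\,\underline{\bu}_0$. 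Substituting the gradient formula into these three expressions and collecting everything as a single bilinear form in $(\underline{\bu}_0,\underline{\bu}_b)$ and $(\underline{\bv}_0,\underline{\bv}_b)$ produces a $(N_0+N_b)\times(N_0+N_b)$ block matrix whose four blocks, read off by letting the trial and test vectors run through canonical basis vectors, are precisely $M_{0,0},M_{0,b},M_{b,0},M_{b,b}$ as given in \eqref{localstiffnessmatrixMk}. The identity $D_K^t=D_K$ noted after \eqref{eq:DK} allows the scattered pieces to coalesce cleanly into the stated matrix products.

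There is no analytic difficulty; the only obstacle is careful bookkeeping of signs and transposes. The minus in $\underline{(\nabla_d v_h)}=-D_K^{-1}Z_K\underline{\bv}_0+\cdots$ propagates into the signs of every $Z_K$- and $B_K$-term in \eqref{localstiffnessmatrixMk}, and the test factor always transposes onto the left of $A_K$ while the trial factor stays on the right, producing the $Z_K^t D_K^{-t}$ and $T_K^t D_K^{-t}$ prefixes. Once these sign and transpose conventions are fixed, each of the four block formulas drops out immediately.
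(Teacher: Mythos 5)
Your method is the right one---and it is essentially the paper's own, since the paper offers nothing beyond the remark that ``an elementary matrix calculation shows'' the result. Three of the four blocks do drop out exactly as you describe, and your sign/transpose conventions for the diffusion and reaction pieces are correct. But your concluding assertion, that the blocks read off from the substitution ``are precisely'' those in (\ref{localstiffnessmatrixMk}), fails for $M_{b,0}$, and this is exactly the place where the ``careful bookkeeping'' you promise is needed. By the definition above (\ref{eq:localstiffness}), $M_{b,0}=\bigl[a(\phi_{0,j},\phi_{b,i})_K\bigr]_{i,j}$: the test function is $\{0,\phi_{b,i}\}$, whose interior component $v_0$ vanishes. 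Hence both the convection term $(\vbeta\cdot\nabla_d u_h,v_0)$ and the reaction term are identically zero on this block, and your own substitution yields
\[
M_{b,0} \;=\; -\,T_K^t D_K^{-t} A_K D_K^{-1} Z_K ,
\]
with no $B_K$ contribution. The stated formula carries the extra term $T_K^t D_K^{-t} B_K^t=\bigl(B_K D_K^{-1}T_K\bigr)^t$; in other words it is $M_{0,b}^t$ (using $A_K^t=A_K$), which coincides with the true $M_{b,0}$ only when $\vbeta={\bf 0}$. The convection form is not symmetric: it pairs the gradient of the trial function with the interior value of the test function, so a $B_K$ term can appear in $M_{0,b}$ (test interior, trial boundary) or in $M_{b,0}$ (test boundary, trial interior), but never in both; the lemma as printed puts one in both, which is inconsistent under either row/column convention.

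So the step that fails is the final ``pattern-matching'': a faithful execution of your plan does not reproduce the stated $M_{b,0}$. To close the gap you must carry out the block-by-block computation explicitly, observe that the convection and reaction contributions are supported only on the blocks whose \emph{test} function has a nonzero interior component (the top row of blocks), and then reconcile the result with the printed formula---most plausibly by flagging the $T_K^t D_K^{-t} B_K^t$ term as a spurious artifact of transposing $M_{0,b}$. As written, your proposal declares agreement precisely where the computation and the claim diverge.
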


\medskip

For the Poisson equation $-\Delta u = f$, we clearly have $A_K=D_K$
and $B_K = 0$, $C_K = 0$. Since $D_K$ is symmetric, the local
stiffness matrix becomes
\begin{equation}\label{poisson.01}
M_K = \begin{bmatrix}Z_K^t D_K^{-1}Z_K & -Z_K^t D_K^{-1}T_K \\[2mm] -T_K^t D_K^{-1}Z_K & T_K^t D_K^{-1}T_K\end{bmatrix}.
\end{equation}
In the rest of this section, we shall demonstrate the computation of
the element stiffness matrix $M_K$ with two concrete examples.

%%%%%%%%%%%%%%%%%%%%%%%%%%%%%%%%%%%%%%%%%%%%%%%%%%%%%%%%%%%%%%%%%%%%%%%
\subsection{For the Triangular Element ${\bf (P_0(K),\,P_0(F),\, RT_0(K))}$}
Let $K$ be a triangular element in $\calT_h$. We consider the case
when $j = l = 0$ and $V_r(K)$ being the lowest order Raviart-Thomas
element. In other words, the discrete space $S_h$ consists of
piecewise constants on the triangles, and piecewise constants on the
edges of the mesh. In this case, the discrete gradient is defined by
using the lowest order Raviart-Thomas element on the triangle $K$.
Clearly, we have $N_0 = 1$, $N_b=3$ and $N_V = 3$.

Let $v_i=(x_i, y_i)$, $i=1,2,3$, be the vertices of the triangle $K$
and $e_i$ be the edge opposite to the vertex $v_i$. Denote by
$|e_i|$ the length of edge $e_i$ and $|K|$ the area of the triangle
$K$. We also denote by $\bn_i$ and $\bt_i$ the unit outward normal
and unit tangential vectors on $e_i$, respectively. Here $\bt_i$
should be in the positive (counterclockwise) orientation. If edge
$e_i$ goes from vertex $v_j$ to $v_k$ and $K$ stays on the left when
one travels from $v_j$ to $v_k$, then it is not hard to see that
$$
\bt_i = \begin{bmatrix}t_{i,1}\\t_{i,2}\end{bmatrix} = \frac{1}{|e_i|}\begin{bmatrix}x_{k}-x_j \\ y_{k}-y_j\end{bmatrix},
\qquad\quad
\bn_i = \begin{bmatrix}n_{i,1}\\n_{i,2}\end{bmatrix} = \frac{1}{|e_i|}\begin{bmatrix}y_{k}-y_j \\ -(x_{k}-x_j)\end{bmatrix}.
$$

\subsubsection{Approach I} One may use the following set of basis functions for
the weak discrete functions on $K$:
\begin{equation}\label{phibasis}
\phi_{0,1} = 1,\qquad
\phi_{b,i} = \begin{cases}1 \quad&\textrm{on }e_i\\[1mm]0&\textrm{otherwise}\end{cases}\quad \textrm{ for }i=1,2,3,
\end{equation}
and
\begin{equation} \label{eq:triRTbasis}
\vchi_i = \frac{|e_i|}{2|K|} \begin{bmatrix}x-x_i \\ y-y_i\end{bmatrix},\quad \textrm{ for }i=1,2,3.
\end{equation}
Notice that $\vchi_i$ forms the standard basis for the lowest order Raviart-Thomas element,
for which the degrees of freedom are taken to be the normal component on edges.
Indeed, $\vchi_i$ satisfies
$$
\vchi_i\cdot\vn_j|_{e_j} = \begin{cases}1 \qquad&\textrm{for }i=j,\\0 \qquad&\textrm{for }i\neq j. \end{cases}
$$
It is straight forward to compute that, for the above defined basis functions,
$$
Z_K = \begin{bmatrix}|e_1| \\ |e_2| \\ |e_3|\end{bmatrix},\qquad
T_K = \begin{bmatrix}|e_1|&0&0\\0&|e_2|&0\\0&0&|e_3|\end{bmatrix}.
$$
The computation of $D_K$ is slightly more complicated, but it can
still be done without much difficulty, especially with the help of
symbolic computing tools provided in existing software packages such
as Maple and Mathematica. For simplicity of notation, denote
$$
\begin{aligned}
l_{i} &= |e_i|^2 \qquad &&\textrm{for }1\le i\le 3,\\
l_{ij} &= |e_i|^2+|e_j|^2\qquad &&\textrm{for }1\le i,j\le 3\textrm{ and }i\neq j,\\
l_{123} &= |e_1|^2 + |e_2|^2 + |e_3|^2. &&
\end{aligned}
$$
Then, it can be verified that
\begin{equation} \label{eq:DKRT0}
\begin{aligned}
D_K &= \frac{1}{48|K|}\begin{bmatrix}
  |e_1|^2\left(3l_{23}-l_1\right) & |e_1||e_2|\left(l_{12}-3l_3\right) & |e_1||e_3|\left(l_{13}-3l_2\right) \\[1mm]
  |e_1||e_2|\left(l_{12}-3l_3\right) & |e_2|^2\left(3l_{13}-l_2\right) & |e_2||e_3|\left(l_{23}-3l_1\right) \\[1mm]
  |e_1||e_3|\left(l_{13}-3l_2\right) & |e_2||e_3|\left(l_{23}-3l_1\right) & |e_3|^2\left(3l_{12}-l_3\right)
\end{bmatrix} \\[2mm]
&= \frac{1}{48|K|} T_K \begin{bmatrix}
 3l_{23}-l_1 & l_{12}-3l_3 & l_{13}-3l_2 \\[1mm]
 l_{12}-3l_3 & 3l_{13}-l_2 & l_{23}-3l_1 \\[1mm]
 l_{13}-3l_2 & l_{23}-3l_1 & 3l_{12}-l_3
\end{bmatrix} T_K^t.
\end{aligned}
\end{equation}
We point out that, the value of $D_K$ given as in (\ref{eq:DKRT0})
agrees with the one presented in \cite{Bahriawati}. A verification
of the formula (\ref{eq:DKRT0}) can be carried out by using the
following fact
$$
\begin{aligned}
|K| &= \frac{1}{2} \begin{vmatrix} 1&1&1\\x_1 & x_2 & x_3 \\ y_1 & y_2 & y_3 \end{vmatrix}, \\[2mm]
|e_i|^2 &= (x_j-x_k)^2 + (y_j-y_k)^2,  \qquad i=1,2,3,\; j\neq
k,\;j\textrm{ and }k \textrm{ different from }i.
\end{aligned}
$$
In computer implementation, it is convenient to use a form for the
local matrix that can be expressed by using only edge lengths, as
the one given by (\ref{eq:DKRT0}).

In addition, using symbolic computing tools and the law of sines and
cosines, we can write $D_K^{-1}$ as follows:
$$
D_K^{-1} = T_K^{-t}\left(\frac{16|K|}{l_{123}}\begin{bmatrix}1&1&1\\1&1&1\\1&1&1\end{bmatrix} +\frac{1}{2|K|} \begin{bmatrix}2 l_1 &l_3-l_{12}&l_2-l_{13} \\[1mm]    l_3-l_{12}&2l_2&l_1-l_{23} \\[1mm]    l_2-l_{13}&l_1-l_{23}&2l_3 \\[1mm]  \end{bmatrix} \right) T_K^{-1}.
$$

Thus, to compute the local stiffness matrix $M_K$, it suffices to
calculate $A_K$, $B_K$ and $C_K$ as given in (\ref{eq:ABCK}), and
then apply Lemma \ref{lem:localstiffness}. Notice that these three
matrices depend on the coefficients $\calA$, $\vbeta$ and $\gamma$,
and quadrature rules may be employed in the calculation. However,
for the simple case of the Poisson equation $-\Delta u = f$, we see
from (\ref{poisson.01}) that
$$
\begin{aligned}
M_{00} &= \begin{bmatrix}\frac{144|K|}{l_{123}}\end{bmatrix},\qquad
M_{0b}=M_{b0}^t = \begin{bmatrix}\frac{-48|K|}{l_{123}}& \frac{-48|K|}{l_{123}} & \frac{-48|K|}{l_{123}}\end{bmatrix},\\[2mm]
M_{bb} &= \frac{16|K|}{l_{123}}\begin{bmatrix}1&1&1\\[1mm] 1&1&1\\[1mm] 1&1&1\end{bmatrix}
 + \frac{1}{2|K|}\begin{bmatrix}2 l_1 &l_3-l_{12}&l_2-l_{13} \\[1mm]
   l_3-l_{12}&2l_2&l_1-l_{23} \\[1mm]
   l_2-l_{13}&l_1-l_{23}&2l_3 \\[1mm]
 \end{bmatrix}.
\end{aligned}
$$

\subsubsection{Approach II} We would like to present another approach for
computing the local stiffness matrix $M_K$ in the rest of this
subsection. Observe that a set of basis functions for the local
space $V_r(K)$ can be chosen as follows
\begin{equation}\label{newbasis}
\vchi_1 = \begin{bmatrix}1\\[1mm] 0\end{bmatrix},\qquad
\vchi_2 = \begin{bmatrix}0\\[1mm] 1\end{bmatrix},\qquad
\vchi_3 = \begin{bmatrix}x-\bar{x}\\[1mm] y-\bar{y}\end{bmatrix},
\end{equation}
where $(\bar{x}=(x_1+x_2+x_3)/3, \bar{y}=(y_1+y_2+y_3)/3)$ is the
coordinate of the barycenter of $K$.  Note that both components of
$\vchi_3$ have mean value zero on $K$. For the weak discrete
function on $K$, we use the same set of basis functions as given in
(\ref{phibasis}). It is not hard to see that
$$
D_K = |K|\begin{bmatrix}1&0&0\\[1mm]0&1&0\\[1mm]0&0&\frac{l_{123}}{36}\end{bmatrix},\qquad
Z_K = \begin{bmatrix}0\\[1mm]0\\[1mm]2|K|\end{bmatrix},\qquad
$$
and
$$
T_K = \begin{bmatrix}y_3-y_2 & y_1-y_3 & y_2-y_1 \\[1mm]
                     x_2-x_3 & x_3-x_1 & x_1-x_2 \\[1mm]
                     \frac{2|K|}{3} & \frac{2|K|}{3} & \frac{2|K|}{3} \end{bmatrix}.
$$
Next, we use the formula (\ref{eq:ABCK}) to calculate the matrices
$A_K$, $B_K$ and $C_K$ for the new basis (\ref{newbasis}). Finally,
we calculate the local stiffness matrix $M_K$ by using the formula
provided in Lemma \ref{lem:localstiffness}.

Since the set of basis functions for the weak discrete space are the
same in Approaches I and II, the resulting local stiffness matrix
$M_K$ would remain unchanged from Approaches I and II. The set of
basis functions (\ref{newbasis}) is advantageous over the set
(\ref{eq:triRTbasis}) in that the matrix $D_K$ is a diagonal one
whose inverse in trivial to compute.

%%%%%%%%%%%%%%%%%%%%%%%%%%%%%%%%%%%%%%%%%%%%%%%%%%%%%%%%%%%%%%%%%%%%%%%
\subsection{For the Cubic Element ${\bf (Q_0(K),\,Q_0(F),\, RT_0(K))}$}
Let $K=[0,a]\times [0,b]\times [0,c]$ be a rectangular box where $a,
b, c$ are positive real numbers. We consider the three-dimensional
cubic element, for which the discrete space $S_h$ consists of
piecewise constants on $K_0$ and piecewise constants on the faces of
$K$. The space for the discrete gradient is the lowest order
Raviart-Thomas element on $K$. We clearly have $N_0 = 1$, $N_b=6$
and $N_V = 6$.

Denote the six faces $F_i$, $i=1,\ldots,6$ by
$$
\begin{aligned}
  {F}_1 \;:\;x &= 0,\qquad  &{F}_2 \;:\;x &= a, \\
  {F}_3 \;:\;y &= 0,\qquad  &{F}_4 \;:\;y &= b, \\
  {F}_5 \;:\;z &= 0,\qquad  &{F}_6 \;:\;z &= c. \\
\end{aligned}
$$
Note that the volume of $K$ is given by $|K|=abc$ and the normal
direction to each face is given by
$$
\bn_1 = \begin{bmatrix}-1\\0\\0\end{bmatrix},\;
\bn_2 = \begin{bmatrix}1\\0\\0\end{bmatrix},\;
\bn_3 = \begin{bmatrix}0\\-1\\0\end{bmatrix},\;
\bn_4 = \begin{bmatrix}0\\1\\0\end{bmatrix},\;
\bn_5 = \begin{bmatrix}0\\0\\-1\end{bmatrix},\;
\bn_6 = \begin{bmatrix}0\\0\\1\end{bmatrix}.
$$
We adopt the following set of basis functions for the weak discrete
space on $K$
$$
\phi_{0,1} = 1,\qquad
\phi_{b,i} = \begin{cases}1 \quad&\textrm{on }F_i\\[1mm]0&\textrm{otherwise}\end{cases}\quad \textrm{ for }i=1,\ldots,6,
$$
and
$$
\vchi_1 = \begin{bmatrix}\frac{x}{a}-1\\0\\0\end{bmatrix},\,
\vchi_2 = \begin{bmatrix}\frac{x}{a}\\0\\0\end{bmatrix},\,
\vchi_3 = \begin{bmatrix}0\\\frac{y}{b}-1\\0\end{bmatrix},\,
\vchi_4 = \begin{bmatrix}0\\\frac{y}{b}\\0\end{bmatrix},\,
\vchi_5 = \begin{bmatrix}0\\0\\\frac{z}{c}-1\end{bmatrix},\,
\vchi_6 = \begin{bmatrix}0\\0\\\frac{z}{c}\end{bmatrix}.
$$
Clearly, each $\vchi_i$ satisfies
$$
\vchi_i\cdot\vn_j|_{F_j} = \begin{cases}1 \qquad&\textrm{for }i=j,\\0 \qquad&\textrm{for }i\neq j. \end{cases}
$$

It is not hard to compute that
$$
D_K = \frac{|K|}{6}\begin{bmatrix}2&-1&0&0&0&0\\ -1&2&0&0&0&0 \\ 0&0&2&-1&0&0 \\  0&0&-1&2&0&0\\ 0&0&0&0&2&-1\\ 0&0&0&0&-1&2\end{bmatrix},\qquad
D_K^{-1} = \frac{2}{|K|}\begin{bmatrix}2&1&0&0&0&0\\ 1&2&0&0&0&0 \\ 0&0&2&1&0&0 \\  0&0&1&2&0&0\\ 0&0&0&0&2&1\\ 0&0&0&0&1&2 \end{bmatrix},
$$
and
$$
Z_K = \begin{bmatrix}bc\\bc\\ac\\ac\\ab\\ab\end{bmatrix},\qquad
T_K = \begin{bmatrix}bc&0&0&0&0&0\\ 0&bc&0&0&0&0 \\ 0&0&ac&0&0&0\\ 0&0&0&ac&0&0\\ 0&0&0&0&ab&0\\ 0&0&0&0&0&ab\end{bmatrix}
$$
Then, the local stiffness matrix $M_K$ can be computed using the
formula presented in Lemma \ref{lem:localstiffness}.

%%%%%%%%%%%%%%%%%%%%%%%%%%%%%%%%%%%%%%%%%%%%%%%%%%%%%%%%%%%%%%%%%%%%%%%%%%%%%%%%%
%%%%%%%%%%%%%%%%%%%%%%%%%%%%%%%%%%%%%%%%%%%%%%%%%%%%%%%%%%%%%%%%%%%%%%%%%%%%%%%%%
%%%%%%%%%%%%%%%%%%%%%%%%%%%%%%%%%%%%%%%%%%%%%%%%%%%%%%%%%%%%%%%%%%%%%%%%%%%%%%%%%
\section{Numerical Experiments}

\smallskip
In this section, we shall report some numerical results for the weak
Galerkin finite element method on a variety of testing problems,
with different mesh and finite elements. To this end, let
$u_h=\{u_0,u_b\}$ and $u$ be the solution to the weak Galerkin
equation (\ref{eq:weakformulation}) and the original equation
(\ref{eq:ellipticproblem}), respectively. Define the error by $e_h =
u_h-Q_h u = \{e_0,\,e_b\}$ where $Q_h u$ is the $L^2$ projection of
$u$ onto appropriately defined spaces. Let us introduce the
following norms:
$$
\begin{aligned}
  H^1\textrm{ semi-norm:}&\qquad \|\nabla_d e_h\| = \left(\sum_{K\in\calT_h} \int_K |\nabla_d e_h|^2\,dx\right)^{1/2} ,\\[2mm]
  \textrm{Element-based $L^2$ norm}:&\qquad \|e_0\| = \left(\sum_{K\in\calT_h} \int_K |e_0|^2\,dx\right)^{1/2},\\[2mm]
  \textrm{Edge/Face-based $L^2$ norm}:&\qquad \|e_b\| = \left(\sum_{F\in\calF_h}
  h_K\int_F |e_b|^2\,ds\right)^{1/2},
\end{aligned}
$$
where in the definition of $\|e_b\|$, $h_K$ stands for the size of
the element $K$ that takes $F$ as an edge/face. We shall also
compute the error in the following metrics
$$
\begin{aligned}
\|\nabla_d u_h - \nabla u\| &= \left(\sum_{K\in\calT_h} \int_K |\nabla_d u_h -
\nabla u|^2\,dx\right)^{1/2},\\[2mm]
\|u_h - u\| &= \left(\sum_{K\in\calT_h} \int_K |u_0 -
u|^2\,dx\right)^{1/2},\\[2mm]
\|e_0\|_{\infty} &= \sup_{\scriptsize \begin{matrix}x\in K_0\\
K\in\calT_h\end{matrix}} |e_0(x)|.
\end{aligned}
$$
Here the maximum norm $\|e_0\|_{\infty}$ is computed over all
Gaussian points, and all other integrals are calculated with a
Gaussian quadrature rule that is of high order of accuracy so that
the error from the numerical integration can be virtually ignored.

%%%%%%%%%%%%%%%%%%%%%%%%%%%%%%%%%%%%%%%%%%%%%%%%%%%%%%%%%%%%%%%%%%%%%%%%%%%%%%%%%
\subsection{Case 1: Model Problems with Various Boundary Conditions} First, we consider
the Laplace equation with nonhomogeneous Dirichlet boundary
condition:
\begin{equation} \label{eq:inhomogeneous}
%\begin{cases}
%   -\Delta u = f\qquad &\textrm{in } \Omega, \\
   u = g \qquad \textrm{on } \partial\Omega.
%\end{cases}
\end{equation}
We introduce a discrete Dirichlet boundary data $g_h$, which is
either the usual nodal value interpolation, or the $L^2$ projection
of $u=g$ on the boundary. Let $\Gamma\subset\partial\Omega$ and
define
$$
\begin{aligned}
S_{g_h,\Gamma,h} = & \{v:\:  v|_{K_0}\in P_j(K_0) \textrm{ or }Q_j(K_0) \textrm{ for all } K\in \calT_h,\\
 &\qquad  v|_F\in P_l(F) \textrm{ or }Q_l(F)  \textrm{ for all } F\in \calF_h, \\
 &\qquad  v = g_h\textrm{ on }\calF_h\cap\Gamma\}.
\end{aligned}
$$
When $\Gamma = \partial\Omega$, we simply denote
$S_{g_h,\partial\Omega,h}$ by $S_{g_h,h}$. The discrete Galerkin
formulation for the nonhomogeneous Dirichlet boundary value problem
can be written as: find $u_h\in S_{g_h,h}$ such that for all $v_h\in
S_{h}^0$,
$$
(\calA\nabla_d u_h, \nabla_d v_h)  + (\vbeta \cdot \nabla_d u_h, v_{0})
+ (\gamma u_{0},\,v_{0}) = (f,\, v_0).
$$

We would like to see how the weak Galerkin approximation might be
affected when the boundary data $u=g$ is approximated with different
schemes (nodal interpolation verses $L^2$ projection). To this end,
we use a two dimensional test problem with domain $\Omega =
(0,1)\times(0,1)$ and exact solution given by $u=\sin(2\pi x +
\pi/2)\sin(2\pi y + \pi/2)$. A uniform triangular mesh and the
element $(P_0(K),\,P_0(F),\, RT_0(K))$ is used in the weak Galerkin
discretization. The results are reported in Table
\ref{tab:inhomogeneous1} and Table \ref{tab:inhomogeneous2}. It can
be seen that both approximations of the Dirichlet boundary data give
optimal order of convergence for the weak Galerkin method, while the
$L^2$ projection method yields a slightly smaller error in $\|e_0\|$
and $\|e_b\|$.
\medskip

Next, we consider a mixed boundary condition:
$$
\begin{cases}
  u = g^D \qquad &\textrm{on } \Gamma_D, \\
  (\calA\nabla u)\cdot\vn + \alpha u = g^R \qquad &\textrm{on } \Gamma_R,
\end{cases}
$$
where $g^D$ is the Dirichlet boundary data, $g^R$ is the Robin type boundary data,
$\alpha\ge 0$, and $\Gamma_D\cap \Gamma_R=\emptyset$, $\Gamma_D\cup \Gamma_R=\partial\Omega$.
When $\alpha = 0$, the Robin type boundary condition becomes the Neumann type boundary condition.

For the mixed boundary condition, it is not hard to see that the weak formulation can be written as:
find $u_h\in S_{g^D_h,\Gamma_D,h}$ such that for all $v_h\in S_{0,\Gamma_D,h}$,
$$
(\calA\nabla_d u_h, \nabla_d v_h)+\langle\alpha u_b,
v_b\rangle_{\Gamma_R} + (\vbeta \cdot \nabla_d u_h, v_{0}) + (\gamma
u_{0},\,v_{0}) = (f, v_{0}) + \langle g^R, v_b\rangle_{\Gamma_R},
$$
where $\langle \cdot,\cdot\rangle_{\Gamma_R}$ denotes the $L^2$
inner-product on $\Gamma_R$. We tested a two-dimensional problem
with $\calA$ to be an identity matrix and $\Omega = (0,1)^2$ with a
uniform triangular mesh. The exact solution is chosen to be
$u=\sin(\pi y)e^{-x}$. This function satisfies
$$
\nabla u\cdot\vn + u = 0
$$
on the boundary segment $x=1$. We use the Dirichlet boundary
condition on all other boundary segments. The element
$(P_0(K),\,P_0(F),\, RT_0(K))$ is used in the discretization. For
the Dirichlet boundary data, the $L^2$ projection is used to
approximate the boundary data $g^D_h$. The results are reported in
Table \ref{tab:inhomogeneous3}. It shows optimal rates of
convergence in all norms for the weak Galerkin approximation with
mixed boundary conditions.

\begin{table}
  \caption{Case 1. Numerical results with Dirichlet data being approximated by the
  usual nodal point interpolation.}
  \label{tab:inhomogeneous1}
\begin{tabular}{||c||cccccc||}
    \hline\hline
   $h$ & $\|\nabla_d e_h\|$ & $\|e_0\|$ & $\|e_b\|$ & $\|\nabla_d u_h - \nabla u\|$ & $\|u_0-u\|$ & $\|e_0\|_{\infty}$ \\
    \hline\hline
   1/8   & 7.14e-01 &  2.16e-02 & 4.05e-02  & 1.01e+0  & 1.30e-01 & 4.43e-02  \\ \hline
   1/16  & 3.56e-01 &  5.61e-03 & 1.01e-02  & 5.04e-01 & 6.53e-02 & 1.12e-02  \\ \hline
   1/32  & 1.78e-01 &  1.41e-03 & 2.53e-03  & 2.51e-01 & 3.27e-02 & 2.86e-03  \\ \hline
   1/64  & 8.90e-02 &  3.55e-04 & 6.32e-04  & 1.25e-01 & 1.63e-02 & 7.15e-04  \\ \hline
   1/128 & 4.45e-02 &  8.88e-05 & 1.57e-04  & 6.29e-02 & 8.18e-03 & 1.79e-04  \\ \hline\hline
   $\begin{matrix}O(h^r)\\r=\end{matrix}$ & 1.0012  &  1.9837  &  2.0014 & 1.0024 &  0.9984  &  1.9879 \\ \hline\hline
   \end{tabular}
\end{table}

\begin{table}
  \caption{Case 1. Numerical results with Dirichlet data being approximated by
  $L^2$ projection.}
  \label{tab:inhomogeneous2}
\begin{tabular}{||c||cccccc||}
    \hline\hline
   $h$ & $\|\nabla_d e_h\|$ & $\|e_0\|$ & $\|e_b\|$ & $\|\nabla_d u_h - \nabla u\|$ & $\|u_0-u\|$ & $\|e_0\|_{\infty}$ \\
    \hline\hline
   1/8   & 7.10e-01 &  1.75e-02 & 3.08e-02  & 1.01e+0  & 1.29e-01 &  3.68e-02 \\ \hline
   1/16  & 3.55e-01 &  4.59e-03 & 7.69e-03  & 5.04e-01 & 6.52e-02 &  9.54e-03 \\ \hline
   1/32  & 1.78e-01 &  1.16e-03 & 1.92e-03  & 2.51e-01 & 3.27e-02 &  2.39e-03 \\ \hline
   1/64  & 8.90e-02 &  2.90e-04 & 4.81e-04  & 1.25e-01 & 1.63e-02 &  6.01e-04 \\ \hline
   1/128 & 4.45e-02 &  7.27e-05 & 1.20e-04  & 6.29e-02 & 8.18e-03 &  1.50e-04 \\ \hline\hline
   $\begin{matrix}O(h^r)\\r=\end{matrix}$ & 0.9993  &  1.9808  &  1.9999  & 1.0015 & 0.9968  &  1.9861 \\ \hline\hline
   \end{tabular}
\end{table}

\begin{table}
  \caption{Case 1. Numerical results for a test problem with mixed boundary conditions,
  where a Robin type boundary condition is imposed on part of the boundary.}
  \label{tab:inhomogeneous3}
\begin{tabular}{||c||cccccc||}
    \hline\hline
   $h$ & $\|\nabla_d e_h\|$ & $\|e_0\|$ & $\|e_b\|$ & $\|\nabla_d u_h - \nabla u\|$ & $\|u_0-u\|$ & $\|e_0\|_{\infty}$ \\
    \hline\hline
   1/8   & 1.55e-01 &  3.18e-03 & 1.14e-02  &  1.95e-01 &  4.51e-02 & 1.12e-02  \\ \hline
   1/16  & 7.87e-02 &  8.20e-04 & 2.90e-03  &  9.82e-02 &  2.25e-02 & 3.18e-03  \\ \hline
   1/32  & 3.94e-02 &  2.06e-04 & 7.29e-04  &  4.92e-02 &  1.12e-02 & 8.40e-04  \\ \hline
   1/64  & 1.97e-02 &  5.17e-05 & 1.82e-04  &  2.46e-02 &  5.64e-03 & 2.15e-04  \\ \hline
   1/128 & 9.87e-03 &  1.29e-05 & 4.56e-05  &  1.23e-02 &  2.82e-03 & 5.46e-05  \\ \hline\hline
   $\begin{matrix}O(h^r)\\r=\end{matrix}$ & 0.9958 &   1.9876  &  1.9926 &   0.9971  &  1.0001  &  1.9262 \\ \hline\hline
   \end{tabular}
\end{table}

%%%%%%%%%%%%%%%%%%%%%%%%%%%%%%%%%%%%%%%%%%%%%%%%%%%%%%%%%%%%%%%%%%%%%%%%%%%%%%%%%
\subsection{Case 2: A Model Problem with Degenerate Diffusion} We consider a test problem
where the diffusive coefficient $\calA$ is singular at some points
of the domain. Note that in this case, the usual mixed finite
element method may not be applicable due to the degeneracy of the
coefficient. But the primary variable based formulations, including
the weak Galerkin method, can still be employed for a numerical
approximation.

More precisely, we consider the following two-dimensional problem
\begin{eqnarray*}
    -\nabla\cdot(xy\,\nabla u) &=& f\qquad \textrm{in } \Omega, \\
    u &=& 0 \qquad \textrm{on } \partial\Omega,
  \end{eqnarray*}
where $\Omega=(0,1)^2$. Notice that the diffusive coefficient
$\calA=xy$ vanishes at the origin. We set the exact solution to be
$u=x(1-x)y(1-y)$. The configuration for the finite element
partitions is the same as in test Case 1. We tested the weak
Galerkin method on this problem, and the results are presented in
Table \ref{tab:variableA} and Figure \ref{fig:variableA}.

Since the diffusive coefficient $\calA$ is not uniformly positive
definite on $\Omega$, we have no anticipation that the weak Galerkin
approximation has any optimal rate of convergence, though the exact
solution is smooth. It should be pointed out that the usual
Lax-Milgram theorem is not applicable to such problems in order to
have a result on the solution existence and uniqueness. However, one
can prove that the discrete problem always has a unique solution
when Gaussian quadratures are used in the numerical integration.
Interestingly, the numerical experiments show that the weak Galerkin
method converges with a rate of approximately $O(h^{0.5})$ in
$\|\nabla_d e_h\|$, $O(h^{1.25})$ in $\|e_0\|$ and $\|e_b\|$. It is
left for future research to explore a theoretical foundation of the
observed convergence behavior.

\begin{table}
  \caption{Case 2. Numerical results for a test problem with degenerate diffusion $\calA$ in the
  domain.}
  \label{tab:variableA}
  \begin{tabular}{||c||cccccc||}
    \hline\hline
   $h$ & $\|\nabla_d e_h\|$ & $\|e_0\|$ & $\|e_b\|$ & $\|\nabla_d u_h - \nabla u\|$ & $\|u_0-u\|$ & $\|e_0\|_{\infty}$ \\
    \hline\hline
   1/8   & 5.61e-02 &  3.32e-03 & 6.60e-03  &  5.75e-02 &  5.48e-03 & 1.27e-02  \\ \hline
   1/16  & 4.03e-02 &  1.38e-03 & 2.81e-03  &  4.09e-02 &  2.59e-03 & 4.90e-03  \\ \hline
   1/32  & 2.95e-02 &  5.68e-04 & 1.16e-03  &  2.96e-02 &  1.23e-03 & 2.21e-03  \\ \hline
   1/64  & 2.15e-02 &  2.35e-04 & 4.83e-04  &  2.15e-02 &  5.97e-04 & 1.16e-03  \\ \hline
   1/128 & 1.55e-02 &  9.93e-05 & 2.02e-04  &  1.55e-02 &  2.91e-04 & 5.99e-04  \\ \hline\hline
   $\begin{matrix}O(h^r)\\r=\end{matrix}$ & 0.4614 &   1.2687  &  1.2594 &   0.4697 &   1.0579 &  1.0912  \\ \hline\hline
   \end{tabular}
\end{table}

\begin{figure}
  \begin{center}
  \caption{Case 2. Convergence rate of $\|\nabla_d e_h\|$, $\|e_0\|$ and
  $\|e_b\|$ for the case of degenerate diffusions.}
  \label{fig:variableA}
  \includegraphics[width=8cm]{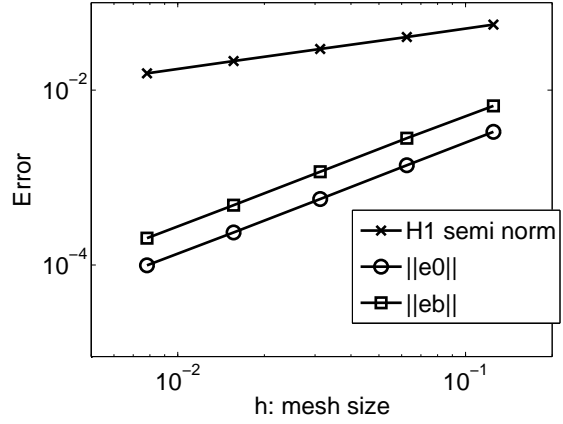}
  \end{center}
\end{figure}

%%%%%%%%%%%%%%%%%%%%%%%%%%%%%%%%%%%%%%%%%%%%%%%%%%%%%%%%%%%%%%%%%%%%%%%%%%%%%%%%%
\subsection{Case 3: A Model Problem on a Domain with Corner Singularity} We consider
the Laplace equation on a two-dimensional domain for which the exact
solution possesses a corner singularity. For simplicity, we take
$\Omega=(0,1)^2 $ and let the exact solution be given by
\begin{equation}
  u(x,y) = x(1-x) y(1-y) r^{-2+\gamma},
\end{equation}
where $ r = \sqrt{x^2+y^2} $ and $ \gamma \in (0,1] $ is a constant.
Clearly, we have
$$
  u \in H^1_0(\Omega) \cap H^{1+\gamma-\varepsilon}(\Omega)
  \quad \textrm{ and }\quad
  u \notin H^{1+\gamma}(\Omega),
$$
where $ \varepsilon $ is any small, but positive number. Again, a
uniform triangular mesh and the element $(P_0(K),\,P_0(F),\,
RT_0(K))$ are used in the numerical discretization. Note that the
weak Galerkin for this problem is exactly the same as the standard
mixed finite element method.

This model problem was numerically tested with $ \gamma=0.5 $ and $
\gamma=0.25 $. The convergence rates are reported in Table
\ref{tab:singularity1} and Table \ref{tab:singularity2}. Notice that
$\|\nabla_d e_h\|$ and $\|e_0\|$ behaves in a way as predicted by
theory (\ref{eq:errorestimation}); i.e., they converge with rates
given by $O(h^{\gamma})$ and $O(h^{1+\gamma})$, respectively. The
result also shows that the approximation on the element edge/face
has a rate of convergence $O(h^{1+\gamma})$.

\begin{table}
  \caption{Case 3. Convergence rates for a problem with corner singularity ($\gamma = 0.5$).}
  \label{tab:singularity1}
\begin{tabular}{||c||cccccc||}
    \hline\hline
   $h$ & $\|\nabla_d e_h\|$ & $\|e_0\|$ & $\|e_b\|$ & $\|\nabla_d u_h - \nabla u\|$ & $\|u_0-u\|$ & $\|e_0\|_{\infty}$ \\
    \hline\hline
   1/8   & 1.88e-01 &  6.40e-03 & 1.47e-02  &  2.54e-01 &  1.49e-02 &  4.30e-02 \\ \hline
   1/16  & 1.36e-01 &  2.20e-03 & 5.28e-03  &  1.84e-01 &  7.66e-03 &  3.01e-02 \\ \hline
   1/32  & 9.74e-02 &  7.62e-04 & 1.86e-03  &  1.32e-01 &  3.89e-03 &  2.12e-02 \\ \hline
   1/64  & 6.93e-02 &  2.65e-04 & 6.57e-04  &  9.42e-02 &  1.96e-03 &  1.49e-02 \\ \hline
   1/128 & 4.92e-02 &  9.33e-05 & 2.32e-04  &  6.69e-02 &  9.88e-04 &  1.05e-02 \\ \hline\hline
   $\begin{matrix}O(h^r)\\r=\end{matrix}$ & 0.4852 &   1.5251  &  1.4992  &  0.4827 &   0.9805 &   0.5066 \\ \hline\hline
   \end{tabular}
\end{table}

\begin{table}
  \caption{Case 3. Convergence rates for a problem with corner singularity ($\gamma = 0.25$).}
  \label{tab:singularity2}
\begin{tabular}{||c||cccccc||}
    \hline\hline
   $h$ & $\|\nabla_d e_h\|$ & $\|e_0\|$ & $\|e_b\|$ & $\|\nabla_d u_h - \nabla u\|$ & $\|u_0-u\|$ & $\|e_0\|_{\infty}$ \\
    \hline\hline
   1/8   & 4.93e-01 &  1.69e-02 & 3.58e-02  &  6.65e-01 &  2.56e-02 &  1.25e-01 \\ \hline
   1/16  & 4.18e-01 &  7.07e-03 & 1.52e-02  &  5.66e-01 &  1.31e-02 &  1.05e-01 \\ \hline
   1/32  & 3.53e-01 &  2.94e-03 & 6.39e-03  &  4.79e-01 &  6.72e-03 &  8.85e-02 \\ \hline
   1/64  & 2.98e-01 &  1.22e-03 & 2.68e-03  &  4.04e-01 &  3.42e-03 &  7.44e-02 \\ \hline
   1/128 & 2.51e-01 &  5.14e-04 & 1.12e-03  &  3.40e-01 &  1.73e-03 &  6.25e-02 \\ \hline\hline
   $\begin{matrix}O(h^r)\\r=\end{matrix}$ & 0.2437  &  1.2613  &  1.2489  &  0.2417  &  0.9717  &  0.2505 \\ \hline\hline
   \end{tabular}
\end{table}

%%%%%%%%%%%%%%%%%%%%%%%%%%%%%%%%%%%%%%%%%%%%%%%%%%%%%%%%%%%%%%%%%%%%%%%%%%%%%%%%%
\subsection{Case 4: A Model Problem with Intersecting Interfaces} This test problem is
taken from \cite{Kell_ApplAnal_1975}, which has also been tested by
other researchers
\cite{LiuMuYe_PrepJCAM_2011,MorinNocheSie_SIREV_2002}. In two
dimension, consider $ \Omega = (-1,1)^2 $ and the following problem
$$
  -\nabla \cdot (\calA \nabla u) = 0,
$$
where $\calA = K_1 \mathbf{I}_2$ in the first and third quadrants,
and $K_2 \mathbf{I}_2 $ in the second and forth quadrants. Here
$\mathbf{I}_2$ is the $2\times 2$ identity matrix and $K_1$, $K_2$
are two positive numbers. Consider an exact solution which takes the
following form in polar coordinates:
$$
u(x,y) = r^{\gamma} \mu(\theta),
$$
where $ \gamma \in (0,1] $ and
\begin{equation}
  \displaystyle
  \mu(\theta) =
  \left\{
    \begin{array}{ll}
      \cos((\pi/2-\sigma) \gamma) \cos((\theta-\pi/2+\rho)\gamma),
        & \mbox{if} \; 0 \le \theta \le \pi/2,
      \\
      \cos(\rho \gamma) \cos((\theta-\pi+\sigma)\gamma),
        & \mbox{if} \; \pi/2 \le \theta \le \pi,
      \\
      \cos(\sigma \gamma) \cos((\theta-\pi-\rho)\gamma),
        & \mbox{if} \; \pi \le \theta \le 3\pi/2,
      \\
      \cos((\pi/2-\rho) \gamma) \cos((\theta-3\pi/2-\sigma)\gamma),
        & \mbox{if} \; 3\pi/2 \le \theta \le 2\pi.
    \end{array}
  \right.
\end{equation}
The parameters $ \gamma, \rho, \sigma $
satisfy the following nonlinear relations
\begin{equation}
  \begin{array}{l}
    R: = K_1/K_2 = -\tan((\pi/2-\sigma) \gamma) \cot(\rho \gamma),
    \\
    1/R = -\tan(\rho \gamma) \cot(\sigma \rho),
    \\
    R = -\tan(\rho \gamma) \cot((\pi/2-\rho) \gamma),
    \\
    \max \{ 0,\pi \gamma -\pi \} < 2 \gamma \rho < \min \{ \pi\gamma,\pi \},
    \\
    \max \{0,\pi-\pi \gamma \} < -2\gamma \sigma < \min \{ \pi,2\pi-\pi\gamma \}.
  \end{array}
\end{equation}
The solution $ u(r,\theta) $ is known to be in
$H^{1+\gamma-\varepsilon}(\Omega) $ for any $ \varepsilon>0 $, and
has a singularity near the origin $(0,0)$.

\smallskip
One choice for the coefficients is to take $ \gamma = 0.1 $, $ R
\approx 161.4476387975881 $, $ \rho \approx \pi/4 $, $ \sigma
\approx -14.92256510455152$. We numerically solve this problem by
using the weak Galerkin method with element $(P_0(K),\,P_0(F),\,
RT_0(K))$ on triangular meshes. It turns out that uniform triangular
meshes are not good enough to handle the singularity in this
problem. Indeed, we use a locally refined initial mesh, as shown in
Figure \ref{FigExIntscIntfcInitTrigMesh}, which consists of 268
triangles. This mesh is then uniformly refined, by dividing each
triangle into 4 subtriangles, to get a sequence of nested meshes.
Although this can not be compared with an adaptive mesh refinement
process, it does improve the accuracy of the numerical
approximation, as shown in our numerical results reported in Table
\ref{tab:interface}. Since the mesh is not quasi-uniform, we do not
expect that the theoretic error estimation
(\ref{eq:errorestimation}) apply for this problem. An interesting
observation of Table \ref{tab:interface} is that, the norm
$\|u_0-u\|$ appears to converge in a much faster rate than $\|e_0\|
= \|u_0-Q_0 u\|$, while the opposite has usually been observed for
other test cases. We believe that this is due to the use of a
locally refined initial mesh in our testing process. When the actual
value of $\|u_0-u\|$ reduces to the same level as the value of
$\|e_0\|$, its convergence rate slows down to the same as $\|e_0\|$.
Readers are also encouraged to derive their own conclusions from
these numerical experiments.

We also observe that, when the initial mesh gets more refined near
the origin, the convergence rates increase slightly. In Table
\ref{tab:interface2}, this trend is clearly shown. For each initial
mesh, it is refined four times to get five levels of nested meshes.
The convergence rates are computed based on these five nested
meshes. The initial meshes are generated by refining only those
triangles near the origin. Two examples of initial meshes are shown
in Figure \ref{FigExIntscIntfcInitTrigMesh}.

\begin{figure}[!h]
\centering \caption{Case 4. The initial triangular mesh for the
intersecting interface problem, with 268 (left) and 300 (right)
triangles.} \label{FigExIntscIntfcInitTrigMesh}
\includegraphics[width=5cm]{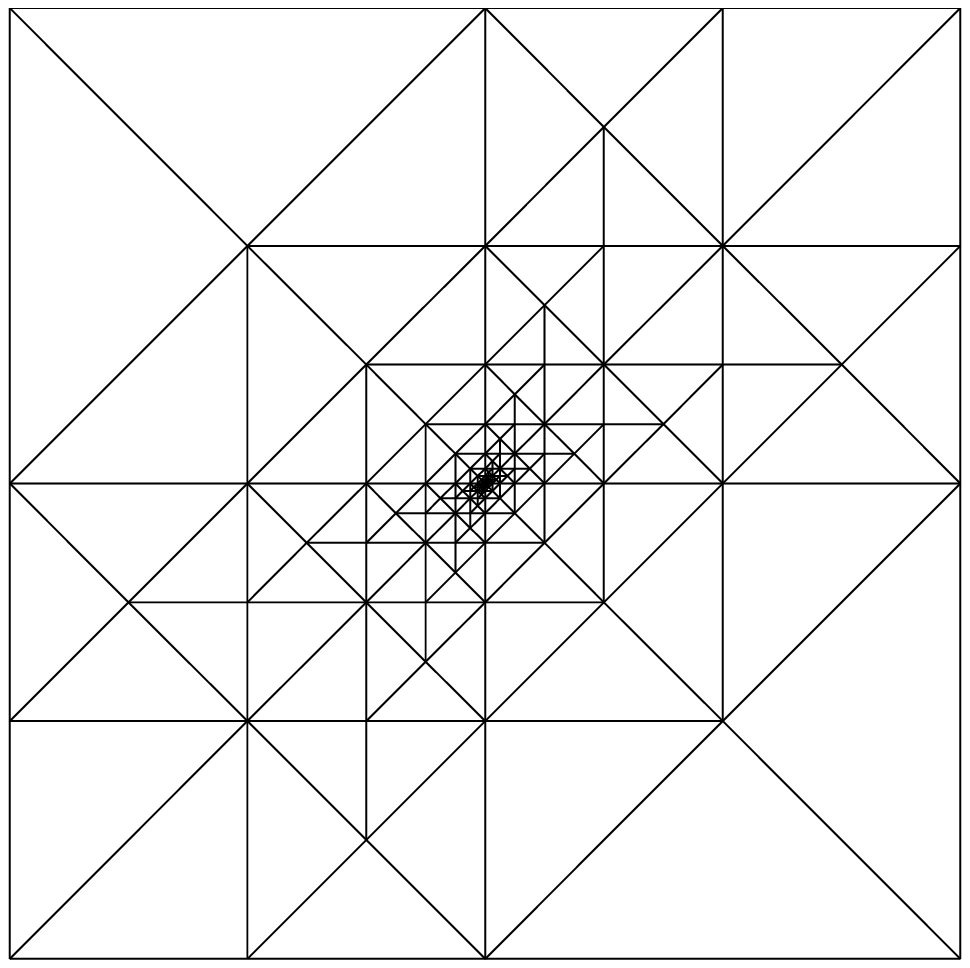}
\includegraphics[width=5cm]{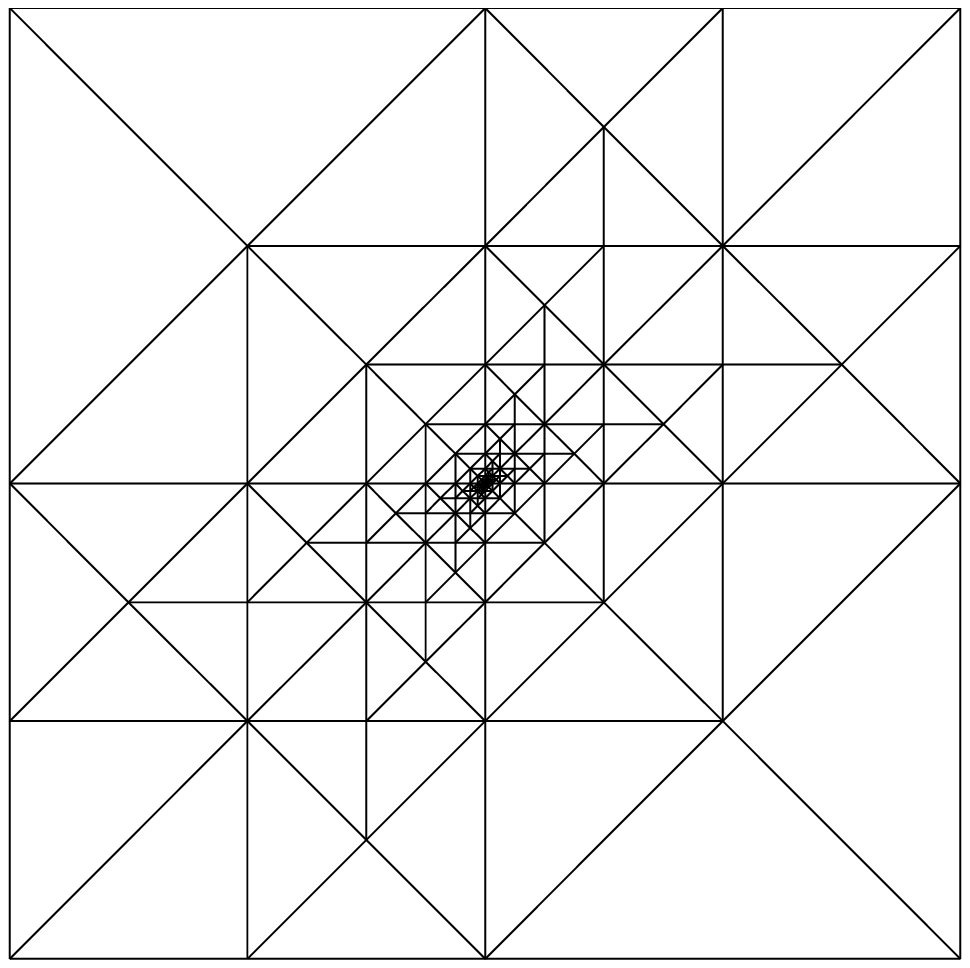}
\end{figure}

\begin{table}
  \caption{Case 4. Convergence rate for the intersecting interface problem with an
  initial mesh containing 268 triangles.}
  \label{tab:interface}
\begin{tabular}{||c||cccccc||}
    \hline\hline
   level & $\|\nabla_d e_h\|$ & $\|e_0\|$ & $\|e_b\|$ & $\|\nabla_d u_h - \nabla u\|$ & $\|u_0-u\|$ & $\|e_0\|_{\infty}$ \\
    \hline\hline
   0 & 1.07e-01 &  3.97e-03 & 9.95e-03  &  1.47e-01 &  2.60e-02 & 1.97e-02  \\ \hline
   1 & 9.76e-02 &  2.92e-03 & 6.44e-03  &  1.26e-01 &  1.33e-02 & 1.94e-02  \\ \hline
   2 & 9.30e-02 &  2.51e-03 & 5.11e-03  &  1.16e-01 &  7.01e-03 & 1.91e-02  \\ \hline
   3 & 9.12e-02 &  2.21e-03 & 4.44e-03  &  1.11e-01 &  3.95e-03 & 1.88e-02  \\ \hline
   4 & 8.98e-02 &  1.95e-03 & 3.91e-03  &  1.07e-01 &  2.55e-03 & 1.84e-02  \\ \hline\hline
   $\begin{matrix}O(h^r)\\r=\end{matrix}$ & 0.0604  &  0.2446 &  0.3229 &   0.1084 &   0.8461 &   0.0239 \\ \hline\hline
   \end{tabular}
\end{table}

\begin{table}
  \caption{Case 4. Convergence rate for the intersecting interface problem with different
  initial meshes, where the first column indicates the total number of triangles in the
  initial mesh.}
  \label{tab:interface2}
\begin{tabular}{||c||cccccc||}
    \hline\hline
     \# & \multicolumn{6}{|c||}{Convergence rates $O(h^r)$, $r=$} \\
    triangles & $\|\nabla_d e_h\|$ & $\|e_0\|$ & $\|e_b\|$ & $\|\nabla_d u_h - \nabla u\|$ & $\|u_0-u\|$ & $\|e_0\|_{\infty}$ \\ \hline\hline
    268 & 0.0604 &   0.2446 &  0.3229 &   0.1084 &   0.8461 &   0.0239 \\ \hline
    300 & 0.0750 &   0.2623 &  0.3489 &   0.1206 &   0.8699 &   0.0373 \\ \hline
    332 & 0.0888 &   0.2818 &  0.3772 &   0.1329 &   0.8912 &   0.0487 \\ \hline
    364 & 0.1020 &   0.3031 &  0.4079 &   0.1454 &   0.9099 &   0.0586 \\ \hline
    396 & 0.1148 &   0.3266 &  0.4411 &   0.1581 &   0.9260 &   0.0673 \\ \hline
    428 & 0.1273 &   0.3522 &  0.4766 &   0.1711 &   0.9396 &   0.0749 \\ \hline
    460 & 0.1396 &   0.3802 &  0.5145 &   0.1843 &   0.9509 &   0.0817 \\ \hline
    492 & 0.1519 &   0.4105 &  0.5548 &   0.1978 &   0.9602 &   0.0878 \\ \hline
    524 & 0.1641 &   0.4432 &  0.5972 &   0.2117 &   0.9678 &   0.0932 \\ \hline\hline
  \end{tabular}
\end{table}

%%%%%%%%%%%%%%%%%%%%%%%%%%%%%%%%%%%%%%%%%%%%%%%%%%%%%%%%%%%%%%%%%%%%%%%%%%%%%%%%%
\subsection{Case 5: An Anisotropic Problem} Consider a two dimensional
anisotropic problem defined in the square domain $\Omega = (0,1)^2$
as follows
$$
-\nabla\cdot(\calA \nabla u) = f,
$$
where the diffusive coefficient is given by
$$
\calA = \begin{bmatrix}k^2 & 0 \\ 0 & 1\end{bmatrix},\qquad \textrm{for }k\neq 0.
$$
We chose a function $f$ and a Dirichlet boundary condition so that
the exact solution is given by $u(x,y) = \sin(2\pi x) \sin(2k\pi
y)$. In applying the weak Galerkin method, we use an anisotropic
triangular mesh that was constructed by first dividing the domain
into $kn\times n$ sub-rectangles, and then splitting each rectangle
into two triangles by connecting a diagonal line. The characteristic
mesh size is $h = 1/n$. We tested two cases with $k = 3$ and $k =
9$. The results are reported in Tables \ref{tab:anistropic1} and
\ref{tab:anistropic2}. The tables show optimal rates of convergence
for the weak Galerkin approximation in various metrics. The
numerical experiment indicates that the weak Galerkin method can
handle anisotropic problems and meshes without any trouble.

\begin{table}
  \caption{Case 5. Convergence rate for the anisotropic problem with $k=3$.}
  \label{tab:anistropic1}
\begin{tabular}{||c||cccccc||}
    \hline\hline
   $h$ & $\|\nabla_d e_h\|$ & $\|e_0\|$ & $\|e_b\|$ & $\|\nabla_d u_h - \nabla u\|$ & $\|u_0-u\|$ & $\|e_0\|_{\infty}$ \\
    \hline\hline
   1/8   & 1.48e+0  &  1.95e-02 & 4.61e-02  &  2.70e+0  &  1.29e-01 &  4.13e-02 \\ \hline
   1/16  & 7.39e-01 &  5.11e-03 & 1.16e-02  &  1.35e+0  &  6.53e-02 &  1.06e-02 \\ \hline
   1/32  & 3.69e-01 &  1.29e-03 & 2.92e-03  &  6.80e-01 &  3.27e-02 &  2.67e-03 \\ \hline
   1/64  & 1.84e-01 &  3.24e-04 & 7.33e-04  &  3.40e-01 &  1.63e-02 &  6.68e-04 \\ \hline
   1/128 & 9.23e-02 &  8.12e-05 & 1.83e-04  &  1.70e-01 &  8.18e-03 &  1.66e-04 \\ \hline\hline
   $\begin{matrix}O(h^r)\\r=\end{matrix}$ & 1.0010 &   1.9793  &  1.9942 &   0.9972 &   0.9975  &  1.9906 \\ \hline\hline
   \end{tabular}
\end{table}

\begin{table}
  \caption{Case 5. Convergence rate for the anisotropic problem with $k=9$.}
  \label{tab:anistropic2}
\begin{tabular}{||c||cccccc||}
    \hline\hline
   $h$ & $\|\nabla_d e_h\|$ & $\|e_0\|$ & $\|e_b\|$ & $\|\nabla_d u_h - \nabla u\|$ & $\|u_0-u\|$ & $\|e_0\|_{\infty}$ \\
    \hline\hline
   1/4   & 7.98e+0  &  6.80e-02 & 2.93e-01  &  1.58e+1  &  2.52e-01 & 1.49e-01  \\ \hline
   1/8   & 3.89e+0  &  2.07e-02 & 7.44e-02  &  8.18e+0  &  1.30e-01 & 4.22e-02  \\ \hline
   1/16  & 1.91e+0  &  5.43e-03 & 1.88e-02  &  4.12e+0  &  6.53e-02 & 1.09e-02  \\ \hline
   1/32  & 9.54e-01 &  1.37e-03 & 4.72e-03  &  2.06e+0  &  3.27e-02 & 2.74e-03  \\ \hline
   1/64  & 4.76e-01 &  3.44e-04 & 1.18e-03  &  1.03e+0  &  1.63e-02 & 6.84e-04  \\ \hline\hline
   $\begin{matrix}O(h^r)\\r=\end{matrix}$ & 1.0161  &  1.9160 &   1.9897 &   0.9857 &   0.9883 &   1.9492 \\ \hline\hline
   \end{tabular}
\end{table}

%%%%%%%%%%%%%%%%%%%%%%%%%%%%%%%%%%%%%%%%%%%%%%%%%%%%%%%%%%%%%%%%%%%%%%%%%%%%%%%%%
\subsection{Case 6: A Three-Dimensional Model Problem} The final
test problem is a three dimensional Laplace equation defined on
$\Omega=(0,1)^3$, with a Dirichlet boundary condition and an exact
solution given by $u=\sin(2\pi x)\sin(2\pi y)\sin(2\pi z)$. The
purpose of this test problem is to examine the convergence rate of
the cubic $(Q_0(K),\,Q_0(F),\, RT_0(K))$ element. The results are
reported in Table \ref{tab:3d1}.

In addition to the optimal rates of convergence as shown in Table
\ref{tab:3d1}, on can also see a superconvergence for $\|\nabla_d
e_h\|$. The same result is anticipated for 2D rectangular elements.
It is left to interested readers for a further investigation,
especially for model problems with variable coefficients.

\begin{table}
  \caption{Case 6. Convergence rate for a 3D model problem with smooth solution.}
  \label{tab:3d1}
 \begin{tabular}{||c||cccccc||}
    \hline\hline
   $h$ & $\|\nabla_d e_h\|$ & $\|e_0\|$ & $\|e_b\|$ & $\|\nabla_d u_h - \nabla u\|$ & $\|u_0-u\|$ & $\|e_0\|_{\infty}$ \\
    \hline\hline
   1/8  & 1.85e-01 &  1.62e-02 & 4.27e-02  &  1.22e+00 &  1.34e-01 & 3.63e-02  \\ \hline
   1/12 & 8.53e-02 &  7.69e-03 & 1.94e-02  &  8.19e-01 &  9.14e-02 & 1.96e-02  \\ \hline
   1/16 & 4.86e-02 &  4.42e-03 & 1.10e-02  &  6.15e-01 &  6.89e-02 & 1.18e-02  \\ \hline
   1/20 & 3.13e-02 &  2.85e-03 & 7.07e-03  &  4.92e-01 &  5.52e-02 & 7.78e-03  \\ \hline\hline
   $\begin{matrix}O(h^r)\\r=\end{matrix}$ & 1.9389 &   1.8984 &  1.9618  &   0.9914  &  0.9737 &   1.6779 \\ \hline\hline
   \end{tabular}
\end{table}

\newpage

%%%%%%%%%%%%%%%%%%%%%%%%%%%%%%%%%%%%%%%%%%%%%%%%%%%%%%%%%%%%%%%%%%%%%%%%%%%%%%%%%

\end{document}